\newcommand{\A}[1]{\href{http://oeis.org/A#1}{A#1}}
\newtheorem{theorem}{Theorem}[section]
\newtheorem{lemma}[theorem]{Lemma}
\newtheorem{proposition}[theorem]{Proposition}
\theoremstyle{definition}
\newtheorem{definition}[theorem]{Definition}
\newcommand{\inclrange}[2]{[#1, $\dots$, #2]}
\begin{document}

\title{Archive Labeling Sequences}

\author{Tanya Khovanova, Gregory Marton}
\date{}
\maketitle

\begin{abstract}
What follows is the story of a family of integer sequences, which started life as a Google interview puzzle back in the previous century when VHS video tapes were in use.
\end{abstract}


\section{Google's Puzzle}

\begin{quote}
Suppose you are buying VHS tapes and want to label them using the stickers that came in the package. You want to number the tapes consecutively starting from 1, and the stickers that come with each package are exactly one of each digit \inclrange{\boxed{0}}{\boxed{9}}. For your first tape, you use only the digit \boxed{1} and save all the other digit stickers for later tapes. The next time you will need a digit \boxed{1} will be for tape number 10. By this time, you will have several unused \boxed{1} stickers. What is the next tape number such that after labeling the tape with that number, you will not have any \boxed{1} stickers remaining?
\end{quote}

\section{Ones Counting Function}

The puzzle appeared in Google Labs Aptitude Test \cite{GLAT} in the following formulation.

\begin{quote}
Consider a function $f$ which, for a given whole number $x$, returns the number of ones required when writing out all numbers between 0 and $x$ inclusive. For example, $f(13) = 6$. Notice that $f(1) = 1$. What is the next largest $x$ such that $f(x) = x$? 
\end{quote}

One might notice that it is unclear that the two questions above are equivalent since the happy owner of the tapes might hypothetically run out of another sticker, say sticker \boxed{2} first, thus not reaching a point where all stickers \boxed{1} are used. Intuitively, we can expect that sticker \boxed{1} is the first to run out, and we will prove this shortly.

It is also unclear if any $x$ even exists such that $f(x) = x$. In teaching, we often ask students to find things that do not exist, expecting a proof of non-existence. While such problems may be considered evil, they are legitimate. At the time, Google's unofficial motto was ``Don't be Evil'', and they weren't: we will see that the answer does indeed exist.

But we digress. Our function $f(x)$ is the number of \boxed{1} stickers needed to label all the tapes up to tape $x$. When $f(x) = x$, then we have used all of the \boxed{1} stickers in labeling the first $x$ tapes. The function $f(x)$ can be found in the Online Encyclopedia of Integer Sequences \cite{OEIS} as sequence \A{094798}.

In considering the other non-zero digits, let $f_d(x)$ count the number of \boxed{d} stickers needed to number the first $x$ tapes (and of course let $f(x)$ henceforth be $f_1(x)$). In the single and double-digit numbers, there are ten of each non-zero digit in the ones column and ten in the tens column, so 20 altogether. Early on, the tape number is ahead of the digit count. By the time we get to 20-digit numbers, though, there should be, on average, two of any single non-zero digit per number.\footnote{We're looking at non-zero digits for now only because one would not use stickers for leading zeroes, unlike other leading digits, but we will return to zeroes shortly.}  Thus, the number of times that any digit is used should eventually catch up with the tape numbers.

Encouraged by assurance of reaching our goal somewhere, we might continue our estimate. In the up-to three-digit numbers, those less than $10^4$, there are 300 of each non-zero digit; in the numbers below $10^5$, there are \numprint{4000}; then \numprint{50000} below $10^6$, and so on up to $10^{10}$, where $f_{d>1}(x)$ and $x$ must (almost) meet. In particular, there are \numprint{10000000000} counts for any non-zero digit in the numbers below \numprint{10000000000}. Hence, were the puzzle asking about any of the digits 2--9, then ten billion could have been an easy answer or, at least a limit on how far we need to search.

Sadly, there is a 1 in the decimal representation of ten billion (and a few zeroes), so we require $10^{10}+1$ digits \boxed{1} to write the numbers $\inclrange{1}{10^{10}}$. Thus, $f_1(10^{10}) \ne 10^{10}$, so $10^{10}$ cannot be the answer to the original puzzle. Thus stymied, we wrote a program to find the solution to the original Google puzzle. And the answer turned out to be 199981, much smaller than we expected.

\section{Counting Other Digits}

We were so enjoying our stymie\footnote{Yes, we just nouned that verb.} that we then wrote a program to solve the puzzle for any non-zero digit.

\begin{definition}
We denote by $a_=(d)$ the smallest number $x > 1$ such that the decimal representation of \boxed{d} appears as a substring of the decimal representations of the numbers $\inclrange{1}{x}$ exactly $x$ times:
\[a_{\boldsymbol{=}}(d) = \min(\{x > 1: f_d(x) \boldsymbol{=} x\}).\]
\end{definition}

We already know that $a_=(1)$ is \numprint{199981}. The sequence $a_=(d)$, which now has number \A{163500}, continues as follows:
\begin{multline*}
\numprint{28263827},\ \numprint{371599983},\ \numprint{499999984},\ \numprint{10000000000},\\
\numprint{9500000000},\ \numprint{9465000000},\ \numprint{9465000000},\ \numprint{10000000000}.
 \end{multline*}

Did you expect this sequence to be increasing? You could have because smaller numbers tend to contain smaller digits than larger numbers. Then why is the sequence not increasing? As we failed to find a value for the digit \boxed{5} below ten billion, we noticed that it is fairly easy to imagine a scenario where you have one less than the number you need, and then the next value has more than you need for equality, and then you equalize again later. In response, we decided to look at a related sequence.
\begin{definition}
Let \[a_{\boldsymbol{>}}(d) = \min(\{x : f_d(x) \boldsymbol{>} x\}).\]
\end{definition}

 The key difference is in using ``more than'' rather than ``exactly''. Thus, we will also call our $a_=(d)$ sequence the ``exactly'' sequence and our $a_>(d)$ the ``more than'' sequence. 

We later discovered that this related sequence was published at IBM's famous puzzle website \href{https://research.ibm.com/haifa/ponderthis/challenges/April2004.html}{``Ponder This'' in April 2004} and was authored by Michael Brand \cite{PT}. This version is quite natural as it wonders when we first run out of the labels. Moreover, the \boxed{1} sticker plays a special role in this puzzle as it must be the digit that will run out first, as we see in the following table and shall prove theoretically in Proposition~\ref{prop:nondecreasing}.

Starting at 1, Table~\ref{table:2seq} shows the first nine terms of the ``exactly'' and ``more than'' sequences.

\begin{table}[ht!]
\begin{center}
\begin{tabular}{ c r r }
  $d$ & $a_=(d)$ & $a_>(d)$ \\
  \boxed{1} & \numprint{199981} & \numprint{199991} \\
  \boxed{2} & \numprint{28263827} & \numprint{28263828} \\
  \boxed{3} & \numprint{371599983} & \numprint{371599993} \\
  \boxed{4} & \numprint{499999984} & \numprint{499999994} \\
  \boxed{5} & \numprint{10000000000} & \numprint{5555555555} \\
  \boxed{6} & \numprint{9500000000} & \numprint{6666666666} \\
  \boxed{7} & \numprint{9465000000} & \numprint{7777777777} \\
  \boxed{8} & \numprint{9465000000} & \numprint{8888888888} \\
  \boxed{9} & \numprint{10000000000} & \numprint{9999999999} \\
\end{tabular}
\end{center}
\caption{The first nine terms of $a_=(d)$ and $a_>(d)$.}
\label{table:2seq}
\end{table}

Some of these rows are interesting in their own right. Notice that \numprint{199991} is ten more than the previously found \numprint{199981}. For all the numbers in between, the initial equality holds ($\forall i\in\inclrange{199981}{199991}$ we have $i = f_1(i)$). Likewise, for $d=\boxed{3}$, each of the numbers between \numprint{371599983} and \numprint{371599993} has exactly one three, so the increase in a number by one is the same as the increase in the count of threes. A similar situation holds for $d=\boxed{4}$.

The sequence $a_>$ can be found using the identifier \A{164321} in the OEIS. Unsurprisingly, the values matching this relaxed second condition are more well-behaved than those with equality.

Did you notice that the second column is increasing? This might be surprising for the fans of the Champernowne constant. What's the Champernowne constant? Imagine you placed an infinitude of labeled VHS tapes in order. The labels together will read as a concatenation of all positive integers, whose digits form the sequence A033307. Now we add a zero with a dot in front to get the constant:
\[0.12345678910111213141516\ldots.\]
The constant is most famous for being a ``normal'' number in any base \cite{BC}. Here \textit{normal} is a mathematical term referring to the distribution of digits. Normal means that all possible strings of digits of the same length have the same density. This means that every digit in base 10 appears with the same density. Despite this, our second column is increasing, demonstrating an unsurprising fact that smaller digits appear earlier than the bigger digits.  

\section{More ``Exactly'' Sequences}

We want to introduce a few more related sequences, one per digit, where the letter E symbolizes exactness or equality.

\begin{definition}
    Let $E_d$ be an increasing sequence of positive integers $x$ such that $f_d(x) = x$.
\end{definition}
The sequence $E_d$ must be finite. After all, starting from 11-digit numbers, the supply of labels starts decreasing. We have to run out of labels. We can be more precise in claiming that the largest value in $E_d$ is not more than $d10^{10}$, which we prove in a more general setting in Proposition~\ref{prop:bounds}.

Sequences $E_d$ are connected to our sequence $a_=(d)$:
\begin{equation*}
a_=(d) =
\begin{cases}
E_d(2) & \text{for } d = \boxed{1} \\
E_d(1) & \text{otherwise}
\end{cases}.
\end{equation*} 

Recall, the special case for \boxed{1} is what made the puzzle interesting because $E_1(1)=1$. The sequences $E_d$ are in the OEIS database, and we show their A-numbers and lengths in Table~\ref{table:seqnum}.

\begin{table}[ht!]
\begin{center}
\begin{tabular}{ c c c }
  $d$ & OEIS ref. for $E_d$ & Number of terms \\
  \boxed{1} & \A{014778} & 83 \\
  \boxed{2} & \A{101639} & 13 \\
  \boxed{3} & \A{101640} & 35 \\
  \boxed{4} & \A{101641} & 47 \\
  \boxed{5} & \A{130427} & 4 \\
  \boxed{6} & \A{130428} & 71\\
  \boxed{7} & \A{130429} & 48 \\
  \boxed{8} & \A{130430} & 343 \\
  \boxed{9} & \A{130431} & 8 \\
\end{tabular}
\end{center}
\caption{The sequence numbers for $E_d$ and their lengths.}
\label{table:seqnum}
\end{table}

The numbers of terms are their own sequence! It appears in the OEIS in disguise: sequence \A{130432} is the last column of Table~\ref{table:seqnum} plus 1, because the sequence author assumed that tapes would be numbered starting with 0. While that choice may have tempted the audience of this paper\footnote{If you numbered your VHS tapes starting at zero, please send a note, kindred spirit!}, it would not have been common practice. However, if we did start at zero, and thus add 1 to the last column, we see a neat pattern: the result is divisible by $d$. This hides an even more interesting fact: the actual values of $E_d$ are periodic modulo $10^{10}$, while being bounded by  $d\cdot 10^{10}$; the latter fact is proven in Proposition~\ref{prop:bounds}.

To explain periodicity, we observe that for $0 \le x < (d-1)10^{10}$, we have $f_d(x+10^{10}) = f_d(x) + 10^{10}$. It follows that the numbers $x$ and $x + 10^{10}$, are either both members of the sequence $E(d)$ or both non-members. Thus the number of the solutions to the equation $f_d(x) = x$ in the range $\inclrange{0}{10^{10}-1}$ is the same as in the range $\inclrange{r10^{10}}{(r+1)10^{10}}-1$, when $r < d$. Hence, we have $d$ ranges with the same number of solutions, which explains the divisibility of \A{130432}$(d)$ by $d$.

When studying Table~\ref{table:2seq}, you might notice that stickers \boxed{5} and \boxed{9} delay the start of the corresponding exact sequences until the latest possible value of $x$ of \numprint{10000000000}. Not surprisingly, in Table~\ref{table:seqnum} the count for the number of terms for values 5 and 9 is much smaller than for other stickers. Due to the argument in the previous paragraph, all solutions of $f_d(x) = x$ for $d$ equaling 5 or 9 have to be of the form $r10^{10}$, where $r < d$. Thus, the last column of~\ref{table:seqnum} has to be the smallest possible value of exactly $d-1$.

Now that the upper bound is clear, we can find the largest values and treat them as another sequence, shown in Table~\ref{table:E_d}.

\begin{table}[ht!]
\begin{center}
\begin{tabular}{r r}
   $d$ & $\max(E(d))$ \\
    \boxed{1} & \numprint{1111111110},\\ 
    \boxed{2} & \numprint{10535000000},\\
    \boxed{3} & \numprint{20500000000},\\
    \boxed{4} & \numprint{30500000000},\\
    \boxed{5} & \numprint{40000000000},\\
    \boxed{6} & \numprint{59628399995},\\
    \boxed{7} & \numprint{69971736170},\\
    \boxed{8} & \numprint{79998399997},\\ 
    \boxed{9} & \numprint{80000000000}.
\end{tabular}
\caption{Largest values of $x$, where $f_d(x) = x$.}
\label{table:E_d}
\end{center}
\end{table}

Let's now dive deeper into the $d=0$ case.

\section{Counting Zeroes}
\label{sec:zeros}

In counting zeroes, let us recall that the puzzle specifies that the first VHS tape is labeled with the \boxed{1} sticker, not \boxed{0}. Expanding on $f$, we denote the function that calculates zeroes in numbers 1 through $x$ inclusive as $f_0(x)$. It is represented in the OEIS as sequence \A{061217}.

We calculated that the smallest number $x$ such that $x$ is less than or
equal to the number of 0s in the decimal representations of $\inclrange{1}{x}$ is $\numprint{100559404366}$, equivalently this number is $a_>(0)$. But what is the corresponding number for the $a_=$ sequence? It appears that no such number exists. To prove it, we need to start with a lemma.

\begin{lemma}
\label{lemma:step}
For any integer $x > 10^{10}$, we have $f_0(x+10^{10}) \ge f_0(x) + 10^{10}$.
\end{lemma}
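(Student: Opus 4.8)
The plan is to rewrite the increment as a sum of per-number zero counts over a block of $10^{10}$ consecutive integers, and then to bound that sum from below using only the last ten digits of each number. Write $z(n)$ for the number of digits equal to $0$ in the ordinary decimal representation of $n$, so that $f_0(y) = \sum_{n=1}^{y} z(n)$ and hence
\[
f_0(x+10^{10}) - f_0(x) \;=\; \sum_{n=x+1}^{x+10^{10}} z(n).
\]

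First I would invoke the hypothesis $x > 10^{10}$: every $n$ in the range $\inclrange{x+1}{x+10^{10}}$ satisfies $n > 10^{10}$ and so has at least eleven decimal digits. Writing $n = H\cdot 10^{10} + L$ with $1 \le H$ and $0 \le L < 10^{10}$, the decimal string of $n$ is exactly the string of $H$ followed by the ten-character zero-padded string of $L$; crucially, since $H \ge 1$, no genuine zero has been invented as a leading digit. In particular, the zeros of $n$ split as $z(n) = z(H) + \zeta(L) \ge \zeta(L)$, where $\zeta(L)$ denotes the number of zeros among the ten digits of the zero-padded representation of $L$.

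Next I would observe that as $n$ runs over the $10^{10}$ consecutive integers $x+1, \dots, x+10^{10}$, the residue $L = n \bmod 10^{10}$ takes each value in $\{0, 1, \dots, 10^{10}-1\}$ exactly once. Therefore
\[
\sum_{n=x+1}^{x+10^{10}} z(n) \;\ge\; \sum_{L=0}^{10^{10}-1} \zeta(L),
\]
and the right-hand sum is a routine count: among the $10^{10}$ strings of length ten over $\{0,\dots,9\}$, each of the ten positions equals $0$ in exactly $10^9$ of them, for a total of $10 \cdot 10^9 = 10^{10}$ zeros. Combining the two displays yields $f_0(x+10^{10}) - f_0(x) \ge 10^{10}$.

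The step I expect to be the only real subtlety is the splitting $z(n) = z(H) + \zeta(L)$, equivalently the inequality $z(n) \ge \zeta(L)$: this is precisely where $x > 10^{10}$ is used, since for $n \le 10^{10}$ the ``high part'' $H$ would be $0$ and the naive ten-digit padding would count leading zeros that never actually appear on a label. For larger $n$ the high part is genuinely nonzero and contributes only additional (nonnegative) zeros, which is all the argument needs.
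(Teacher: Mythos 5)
Your proof is correct and follows the same idea as the paper's (much terser) argument: the block of $10^{10}$ consecutive numbers runs through every possible combination of the last ten digits, contributing at least $10^{10}$ zeros, with the hypothesis $x > 10^{10}$ ensuring the padded low-order zeros are genuine digits. You have simply spelled out carefully what the paper states in one sentence, so no further comment is needed.
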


\begin{proof}
Indeed, numbers between $x$ and $x+10^{10}$ go through all possible combinations of the last ten digits. Hence, they contain at least $10^{10}$ zeroes.
\end{proof}

Now we are ready to prove our theorem.

\begin{theorem}
The value $a_=(0)$ is not well-defined.
\end{theorem}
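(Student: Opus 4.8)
The plan is to nail down the sign of $f_0(x)-x$ for every $x>1$ and show it is never $0$. Write $g(x)=f_0(x)-x$. Recall from the discussion above that $a_>(0)=\numprint{100559404366}$ was obtained precisely as the smallest $x$ with $f_0(x)\ge x$; hence we already know $f_0(x)<x$ for every $x$ with $1<x<a_>(0)$, and also $f_0(a_>(0))>a_>(0)$, that is, $g(a_>(0))\ge 1$. So it suffices to prove $f_0(x)>x$ for all $x\ge a_>(0)$: combined with the low range this gives $f_0(x)\ne x$ for all $x>1$, so $\{x>1:f_0(x)=x\}$ is empty and $a_=(0)$ is undefined.

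For the tail, the engine is Lemma~\ref{lemma:step}, but it only pins $g$ down up to $10^{10}$-periodicity, so first I would establish $g>0$ on the single period $P=[a_>(0),\,a_>(0)+10^{10})$. Here I use a purely arithmetical observation about where $a_>(0)$ lands: since $a_>(0)\ge 10^{11}$ and $a_>(0)+10^{10}=\numprint{110559404366}<\numprint{111000000000}$, every integer $n\in P$ is a twelve-digit string $\overline{d_1d_2\cdots d_{12}}$ with $d_1=1$, and either $d_2=0$, or else $d_2=1$ which --- $n$ being below $\numprint{111000000000}$ --- forces $d_3=0$. So every $n\in P$ contains a digit $0$, whence $g(n)-g(n-1)=(\#\text{zeros of }n)-1\ge 0$ all through $P$; thus $g$ is non-decreasing on $P$ and stays $\ge g(a_>(0))\ge 1>0$. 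Now for an arbitrary $x\ge a_>(0)$, write $x=y+k\cdot 10^{10}$ with $y\in P$ and $k\ge 0$; since $y,y+10^{10},\dots,y+(k-1)10^{10}$ all exceed $10^{10}$, iterating Lemma~\ref{lemma:step} gives $f_0(x)\ge f_0(y)+k\cdot 10^{10}$, and $f_0(y)>y$ then yields $f_0(x)>y+k\cdot 10^{10}=x$. Hence $f_0(x)>x$ for every $x\ge a_>(0)$, and the theorem follows.

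The step I expect to be the real obstacle --- and the reason Lemma~\ref{lemma:step} alone does not suffice --- is controlling $g$ on that first post-crossover period $P$. The quantity $f_0(x)-x$ first becomes positive only by the narrowest margin (one unit), so a priori it could slide back down and touch $0$ somewhere in $P$, which would be an honest solution of $f_0(x)=x$. What rescues the proof is the accident that a full period past $a_>(0)$ we are still short of $\numprint{111000000000}$, so the leading digits alone force a zero into every number of $P$ and make $g$ monotone there. I would therefore treat the two inequalities $a_>(0)\ge 10^{11}$ and $a_>(0)+10^{10}<\numprint{111000000000}$ as the load-bearing facts and verify them first; everything after that is routine bookkeeping with Lemma~\ref{lemma:step}.
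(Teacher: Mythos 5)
Your proof is correct, and the tail of your argument takes a genuinely different route from the paper's. Both arguments rest on the same computational inputs: that $f_0(x)<x$ for all $1<x<\numprint{100559404366}$ and that $f_0(\numprint{100559404366})=\numprint{100559404367}$. From there the paper hunts for a witness $y>10^{10}$ with $f_0(y)>y+10^{10}$, after which Lemma~\ref{lemma:step} keeps $f_0$ ahead forever; since $f_0(\numprint{111111111111})=\numprint{120987654321}$ exceeds its argument by only $\numprint{9876543210}<10^{10}$, that route needs several further explicit evaluations of $f_0$ (the ``jump to the next zero-free number'' iteration the paper summarizes as ``we performed appropriate calculations''). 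You instead establish $f_0(x)-x>0$ on one full period $P=[a_>(0),\,a_>(0)+10^{10})$ by a purely structural observation: every integer in $P$ begins with ``10'' or ``110'' and hence contains a zero, so the difference $f_0(x)-x$ is non-decreasing across $P$ and stays at least $1$; Lemma~\ref{lemma:step} then propagates positivity to every later period with no additional computation. Your two load-bearing inequalities, $a_>(0)\ge 10^{11}$ and $a_>(0)+10^{10}=\numprint{110559404366}<\numprint{111000000000}$, both hold, and the decomposition $x=y+k\cdot 10^{10}$ with $y\in P$ is legitimate since $P$ has length exactly $10^{10}$ and all iterates exceed $10^{10}$. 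What your approach buys is a cleaner closure: the infinite check is reduced to a single computed value of $f_0$ plus digit bookkeeping, whereas the paper's version leaves an unspecified finite computation to the reader.
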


\begin{proof}
We calculated that $f_0(\numprint{100559404366}) = \numprint{100559404367}$. Its predecessor then must be $f_0(\numprint{100559404365})=\numprint{100559404364}$ with three fewer zeros. We verified that there were no equalities up to this point, and indeed up to a bigger number, but of course we couldn't continue checking up to infinity. 

So we need other arguments. Notice that number $\numprint{100559404366}$ has three zeroes. Hence, for some $y$ that are not much bigger than $\numprint{100559404367}$, we will have that $f_0(y+1) \ge f_0(y) +3$. For some time, the sequence $f_0$ will be increasing in steps not less than three. We are getting away from the equality at high speed. 

Were we dealing with random 12-digit numbers, then such numbers would have on average $11/10$ zeroes. Hence, $f_0(x)$ grows faster than $x$ at this point. But this consideration is not a proof. To finish the proof of the theorem, we need to find a number $y > 10^{10}$ such that $f_0(y) > y + 10^{10}$ and check that there is no solution to $f_0(x) = x$ below $y$. By Lemma~\ref{lemma:step}, that number $y$ would guarantee that $f_0(x)$ will always be ahead of its index after $y$. 

Let us find such a number. We start with \numprint{100559404366}. The sequence $f_0(x)$ will continue to grow not slower than its index $x$  until the next number that doesn't contain zeroes. Such a number is \numprint{111111111111}. We calculated that $f_0(\numprint{111111111111})= \numprint{120987654321}$. So the number of zeroes is way ahead of the number itself. As the sequence $f_0(x)$ is non-decreasing, we can't have $y$ such that $f_0(y) = y$ until \numprint{120987654321}. This way, we can speed up the process, and we need a small number of iterations to get to such a number. We performed appropriate calculations, thus concluding the proof of the theorem.
\end{proof}

\section{Greater or Equal}

In addition to $a_=$ and $a_>$, we counted the ``greater or equal'' sequence $a_\ge(d)$, where $d$ again denotes the sticker in question. The great property of this latter sequence is that 
\[a_\ge(d) = \min(a_=(d), a_>(d)).\]
This sequence appears in the database as sequence \A{164935}. How can we define such a sequence for multi-digit stickers? The idea is to ignore stickers and consider multi-digit strings; for which we give proper definitions in a later section.

One more caveat: we defined $a_=(1)$ to be the smallest number greater than 1 satisfying the VHS property. This complicated condition was needed so that the sequence would include the solution of Google's puzzle, \numprint{199981}, as the first term. But \A{164935}$(1) = 1$ as it should be. This sequence is non-decreasing for the same reason the ``more than'' sequence is non-decreasing. We prove this in Proposition~\ref{prop:nondecreasing}.

\section{The Algorithms}

So that you may easily check the facts we have described, we would like to share the \href{https://colab.research.google.com/drive/1pGfgQWvJR1IAG3t4dNnrTnc07UvyV4xC}{algorithms we used} \cite{MartonKhovanova2023}. In this section, we describe a more efficient way to find $f_d(x)$. We counted the digit $d$ separately in each decimal place it occurred. Suppose we want to count how many times the digit $d$ occurred in the $k$-th place from the right in the set $\inclrange{1}{x}$. It depends on which digit the number $x$ has in the $k$-th place from the right. Suppose this digit is $x_k$. Consider the number $y = \lfloor x/10^k\rfloor 10^{k}$. We chose $y$ because it is the largest number not exceeding $x$ with $k$ zeros at the end. In the range $\inclrange{1}{y-1}$, if we pad smaller integers with zeros on the left, each digit appears in the $k$-th place from the right the same number of times. Therefore, any digit $d>0$ appears in this range $\frac{y}{10} = \lfloor x/10^k\rfloor 10^{k-1}$ times.

Now, we need to calculate how often $d$  appears in the place of interest in the range $\inclrange{y}{x}$. If $x_k < d$, then it doesn't appear at all. If $x_k > d > 0$ we need to add $10^{k-1}$. If $x_k = d > 0$, we need to add the total count of our digit in the range, which is $(x \mod 10^{k-1}) + 1$.

We need to consider the case of $d=0$ separately, as we should not count leading zeros, nor zero itself, as the sequence starts at 1. If $x_k > d = 0$, the count is $\lfloor x/10^k\rfloor 10^{k-1}$, (the same as the $x_k<d$ case for other digits), but if the $k$-th digit is zero, we need to subtract the number of digits in the range $\inclrange{1}{y-1}$ that have fewer than $k$ digits and add the number of digits in the range $\inclrange{y}{x}$ that have 0 in the $k$-th place from the right. Thus the adjusment is $ - 10^{k-1} + (x \mod 10^{k-1}) + 1$.

To summarize, we would like to express $f_d(x)$ as the sum of the contributions $c_d(x_k)$ of the counts of the digit $d$ in the $k$-the place from the right. This contribution depends on the value of $x_k$. Let $Y$ be shorthand for $\lfloor x/10^k\rfloor \cdot 10^{k-1}$, then:
\newcommand{\whenand}[2]{\text{when } #1 \text{ and } #2}
\begin{equation*}
c_d(x_k) = 
\begin{cases}
Y                                  & \whenand{d > 0}{x_k < d} \\
Y + (x \mod 10^{k-1}) + 1           & \whenand{d > 0}{x_k = d} \\
Y + 10^{k-1}                        & \whenand{d > 0}{x_k > d} \\
Y                                  & \whenand{d = 0}{x_k > d} \\
Y - 10^{k-1} + (x \mod 10^{k-1}) + 1 & \whenand{d = 0}{x_k = d}
\end{cases}.
\end{equation*}
Summing over each $k$-th place, we get
\begin{equation}
f_d(x) = \sum_k{c_d(x_k)}.
\end{equation}

We can now use this closed-form for $f_d(x)$ in much faster searches for $a_\ge(d)$. To do so, we need the following lemma that allows us to skip a lot of numbers in our search.

\begin{lemma}
\label{lemma:safe}
Suppose we already know that $a_\ge(d) > x$. Suppose, in addition, we can show that $f_d(y) < x$ for some $y > x$. Then $a_\ge(d) > y$.
\end{lemma}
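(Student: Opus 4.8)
The plan is to deduce the lemma from two elementary facts: that $f_d$ is non-decreasing, and that the hypothesis $a_\ge(d) > x$ is nothing more than the assertion that $f_d(z) < z$ for every positive integer $z \le x$. Given these, the desired conclusion $a_\ge(d) > y$ is obtained by verifying the inequality $f_d(z) < z$ separately on the block $\{1,\dots,x\}$ and on the block $\{x+1,\dots,y\}$.

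First I would record monotonicity: since $f_d(z+1) = f_d(z) + (\text{number of digits equal to } d \text{ in the decimal representation of } z+1)$ and the added term is non-negative, $f_d$ is non-decreasing; equivalently this is visible from the closed form derived above, where each contribution $c_d(x_k)$ only grows as the place digits of $x$ grow. Next I would unravel the hypotheses. By definition, $a_\ge(d)$ is the least argument at which $f_d$ reaches or exceeds its argument, so $a_\ge(d) > x$ says exactly that $f_d(z) < z$ for all $z$ with $1 \le z \le x$; and the additional hypothesis supplies a single number $y > x$ with $f_d(y) < x$.

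Then I would fix an arbitrary $z$ with $1 \le z \le y$ and show $f_d(z) < z$. If $z \le x$, this is immediate from the unravelled first hypothesis. If instead $x < z \le y$, then monotonicity gives $f_d(z) \le f_d(y)$, and combining this with $f_d(y) < x$ and with $x < z$ yields the chain $f_d(z) \le f_d(y) < x < z$, so again $f_d(z) < z$. Hence no $z \le y$ satisfies $f_d(z) \ge z$, which is precisely the statement $a_\ge(d) > y$.

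There is essentially no obstacle here; the only point requiring care is the direction of the final chain of inequalities, which is why the lemma demands the strong inequality $f_d(y) < x$ rather than merely $f_d(y) < y$: the slack between $x$ and $z$ is exactly what carries $f_d(y)$ below $z$. (The degenerate case $d = \boxed{1}$, where $a_\ge(1) = 1$ so that the hypothesis $a_\ge(1) > x$ cannot hold for $x \ge 1$, makes the lemma vacuously true there; in practice it is applied with $x$ enormous, which is where it earns its keep by letting the search leap from $x$ past $y$ in a single step.)
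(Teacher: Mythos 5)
Your proof is correct and follows essentially the same route as the paper: monotonicity of $f_d$ together with $f_d(y) < x$ bounds $f_d$ below $x$ (hence below $z$) on the block $x < z \le y$, while the hypothesis $a_\ge(d) > x$ handles $z \le x$. The extra detail you supply (unravelling the definition of $a_\ge$ and the explicit chain $f_d(z) \le f_d(y) < x < z$) is just a fuller write-up of the paper's two-line argument.
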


\begin{proof}
As $f_d$ is non-decreasing and $f_d(y) < x$, we know that the value of function $f_d$ on any element in the range $\inclrange{x}{y}$ is not greater than $x$. It follows that $a_\ge(d) > y$.
\end{proof}

We search the infinite space of possible values using a variation of unbounded binary search \cite{bentley_yao_1976}. We call a range of numbers $\inclrange{x}{x+p}$ ``safeleft'' if we can guarantee that $a_\ge(d) > x$. We start with a safeleft range \inclrange{2}{3}. When $d=1$, we can't start with the range whose left side is 0, as we will get the answer 1, which we want to skip. It is easy to see that the base case holds for 2 in other words, $f_d(2) < 2$ for any $d$, as we only use one \boxed{1} and one \boxed{2} sticker up to tape number 2. Then we iterate to the next safeleft range as follows:
\begin{itemize}
\item If $f_d(x+p) < x$, then $a_{\ge}(d)$ is not in the range by Lemma~\ref{lemma:safe}, making any range starting with $x+p$ safeleft. The next range to search is $\inclrange{x+p}{x+3p}$, where we move the start of the range to $x+p$ and increase the size of the range twice.
\item If $f_d(x+p) \ge x$, then $a_{\ge}(d)$ is not guaranteed to be outside of the range. The next range to search is $\inclrange{x}{x+p/2}$, where we keep the start of the range and halve the size of the range.
\item Suppose we reduced the range size to 1. Then if $f_d(x) < x$ and $f_d(x+1) \geq x+1$, we have $a_\ge(d) = x+1$. If not, then any range starting with $x+1$ is safe, and the new range is $\inclrange{x+1}{x+3}$.
\end{itemize}

After the value of $a_{\ge}(d)$ is found, finding the value of $a_{>}(d)$ is easy for non-zero digits. One may need to check several next values. For zero it is not as easy, but see Section~\ref{sec:zeros}. When looking for the exact sequence $a_=(d)$, the answer is not always near $a_{\ge}(d)$, but we can still search rapidly. If we already showed that $a_=(d) > x$  and if $f_d(x) > x$, then $a_=(d) \ge f_d(x)$. After all, if we saw no digits $d$ in the range $\inclrange{x}{f_d(x)-1}$ at all, $x$ would not catch up to $f_d(x)$ below $f_d(x)$.

\section{Multiple Digits}

There is no reason that we should be constrained to single digits. The formal statement of the problem provides a generalization, where we consider substrings of each of the numbers $\inclrange{1}{x}$ rather than digits in those numbers. We should note that we count every occurrence of a substring separately. Thus \boxed{11} will be counted twice as a substring of \numprint{1113} even though an actual sticker with ``11'' printed on it could be used in either position, but not in both positions simultaneously.

Now that we defined the ``more than'' sequence $a_>$ for any positive integer, we can prove the statement we promised before, along with a corresponding statement about the $a_\ge$ sequence.

\begin{proposition}
\label{prop:nondecreasing}
    The ``more than'' sequence $a_>$ and the ``greater or equal'' sequence $a_\ge$ are non-decreasing after the first terms $a_>(1)$ and $a_\ge(1)$.
\end{proposition}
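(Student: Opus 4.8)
The plan is to deduce the whole proposition from a single pointwise inequality, namely $f_s(x) \ge f_{s+1}(x)$ for every positive integer $s$ and every $x \ge 1$; this is where I expect the actual work to be. Granting it, the set $\{x : f_{s+1}(x) > x\}$ is contained in $\{x : f_s(x) > x\}$, and the same containment holds with $\ge$ in place of $>$, so comparing minima yields $a_>(s) \le a_>(s+1)$ and $a_\ge(s) \le a_\ge(s+1)$ for all $s \ge 1$. (These minima exist because, exactly as in the digit case already discussed, $f_s$ eventually outgrows the identity.) Chaining these inequalities makes both sequences non-decreasing, in particular after their first terms; and since $a_>(1)$ and $a_\ge(1)$ are then the least terms of their respective sequences, this is precisely the sense in which the sticker \boxed{1} is the first to run out.

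To prove the inequality I would regard an \emph{occurrence} of a string $t$ among the numbers $1, 2, \dots, x$ as a pair $(m,p)$ with $1 \le m \le x$ and $p$ a place at which the decimal representation of $m$ displays $t$, so that $f_t(x)$ counts exactly these pairs. I would then construct an injection from occurrences of $s+1$ to occurrences of $s$. Given an occurrence of $s+1$ inside some $m \le x$ whose rightmost digit occupies the $10^k$ place, put $m' := m - 10^k$. This subtraction decrements by one the length-$L$ block of $m$ holding that copy of $s+1$, where $L$ is the number of digits of $s+1$: the borrow cannot escape the block, since the block read as a number equals $s+1 \ge 1$, so every other digit of $m$ is left untouched. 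Hence $1 \le m' < m \le x$, and the representation of $m'$ now shows $s$, written with $L$ digits, in that block. When $s+1$ is not a power of ten, this $L$-digit form is simply the decimal representation of $s$; when $s+1 = 10^j$, it is $0\underbrace{9\cdots9}_{j}$, whose leading zero is numerically irrelevant and whose run of $j$ nines is the copy of $s = 10^j - 1$ that I record. Sending $(m,p)$ to $m'$ together with that occurrence is the desired map.

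Injectivity is essentially free: the place $k$ is preserved, and $m$ is recovered from its image simply as $m = m' + 10^k$, so distinct occurrences of $s+1$---whether inside one number or spread across different numbers---are sent to distinct occurrences of $s$; therefore $f_{s+1}(x) \le f_s(x)$, as needed. The one step I expect to require genuine care is the case $s+1 = 10^j$, where the block that held $s+1$ loses a digit and becomes a run of $j$ nines: there one must confirm that $m'$ is still a bona fide positive integer with no leading zero, which holds because, if that block had sat at the very front of $m$, the new leading digit of $m'$ is a \boxed{9}. The remaining checks---that $m'$ lands in $1, \dots, x$, that it genuinely contains $s$, and that the construction can be reversed---are routine.
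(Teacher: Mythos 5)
Your proof is correct, and its skeleton matches the paper's: establish the pointwise inequality $f_i(x) \ge f_j(x)$ for stickers $0 < i < j$, then compare the first places where each counting function exceeds (or meets) the identity to get $a_>(i) \le a_>(j)$ and $a_\ge(i) \le a_\ge(j)$. The difference is in how that inequality is obtained. The paper handles an arbitrary pair $i<j$ in one stroke, via the injection that replaces an occurrence of the string $j$ inside a number by the string $i$, producing a smaller number containing $i$; this is stated in one sentence, with the bookkeeping (injectivity as a map on occurrence pairs, the change in length when $i$ and $j$ have different digit counts, leading digits) left implicit. You instead prove the inequality only for consecutive values $s$, $s+1$, via the map $m \mapsto m - 10^k$ that decrements the block holding the occurrence, and then chain over $s$; transitivity makes this equally general. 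The price of your route is the borrow analysis and the special case $s+1 = 10^j$, where the block shrinks to a run of nines and one must check the leading-digit issue you flag; the payoff is that injectivity is transparent ($m = m' + 10^k$, with $k$ read off from the recorded occurrence of $s$), which is precisely the detail the paper's substring-replacement sketch glosses over. Both arguments are sound: yours is more careful where the paper is terse, and the paper's single replacement is more direct where yours needs a chain of consecutive comparisons.
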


\begin{proof}
   For two strings $i$ and $j$, if $i<j$, then for every occurrence of $j$ in a number $x$, we can get a smaller number with an occurrence of $i$ by replacing $j$ with $i$. It follows that for $0 < i < j$, and any $x$, 
   \[f_i(x) \ge f_j(x).\]
   It follows that $f_i(a_>(j)) \ge f_j(a_>(j)) = a_>(j)$, and $f_i(a_\ge(j)) \ge f_j(a_\ge(j)) = a_\ge(j)$ implying that $a_>(i) \le a_>(j)$ and $a_\ge(i) \le a_\ge(j)$.
\end{proof}

Inspired, we wrote an even fancier program to find values of the ``more or equal'' sequence $a_{\ge}$ for multi-digit numbers. As before, we start by calculating $f_d(x)$, where $d$ is an $n$-digit number. As a warm-up, we have an exercise for the reader to check that for $k \ge n$ 
\[f_d(10^k - 1) = k10^{k-n}.\]

To calculate $f_d(x)$, we count $d$'s contribution separately in each decimal place it occurred, parametrized by the placement of its last digit. Suppose we want to count how many times the $n$-digit string $d$ occurred so that its last digit is in the $k$-th place from the right in the range $\inclrange{1}{x}$. It depends on which $n$-digits the number $x$ has in the corresponding place. Suppose this $n$-digit number is $x_k$. Consider the number $y = \lfloor x/10^{k+1}\rfloor 10^{k+1}$. In the range $\inclrange{1}{y-1}$, if we pad smaller integers with zeros on the left, each $n$-digit number appears in the $k$-th place from the right the same number of times. Therefore, $d$ appears in this range $\frac{y}{10^n} = \lfloor x/10^{k+n-1}\rfloor 10^{k-1}$ times.

Now, we need to calculate how often $d$  appears in the place of interest in the range $\inclrange{y}{x}$. If $x_k < d$, then it doesn't appear at all. If $x_k > d$ we need to add $10^{k-1}$. If $x_k = d$, we need to add the total count of appearance $d$ in the given spot in the range $\inclrange{y}{x}$, which is $(x \mod 10^{k-1}) + 1$.

To summarize, we would like to express $f_d(x)$ as the sum of the contributions $c_d(x_k)$ of the counts of the $n$-digit sticker $d$ in the $k$-th place from the right. Here we assume that $n > 1$, and, consequently, $d > 0$, which makes the following formula simpler than the one for a single digit. This contribution depends on the value of $x_k$. Let $Y$ be shorthand for $\lfloor x/10^{k-n+1}\rfloor \cdot 10^{k-1}$, then:

\begin{equation*}
c_d(x_k) = 
\begin{cases}
Y                                   & \textrm{ when } x_k < d \\
Y + (x \mod 10^{k-1}) + 1           & \textrm{ when } x_k = d \\
Y + 10^{k-1}                        & \textrm{ when } x_k > d
\end{cases}.
\end{equation*}

After working out the case $d=0$, we were not immediately sure that $a_=(d)$ is defined for every $d > 0$. However, it is.

\begin{theorem}
\label{thm:welldefined}
The value $a_=(d)$ is well-defined for any $d > 0$.
\end{theorem}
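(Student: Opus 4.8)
The plan is to sidestep any delicate monotonicity analysis — the naive intermediate‑value approach fails because $f_d(x)-x$, although it drops by at most $1$ at each step, can jump \emph{upward} past $0$ (exactly what happens for $d=0$) — and instead simply to exhibit, for each $n$‑digit $d$, an explicit integer $x>1$ with $f_d(x)=x$. The starting point is the warm‑up exercise, which gives $f_d(10^{k}-1)$ in closed form; this quantity equals $10^{k}$ for a suitable integer $K$ (with the exercise's closed form $(k-n+1)10^{k-n}$ one takes $K=10^{n}+n-1$), so there $f_d$ sits exactly one above the diagonal. Consider the next integer, $10^{K}$: its decimal representation is a $1$ followed by $K$ zeros, whose only length‑$n$ substrings are $10^{n-1}$ and $\underbrace{0\cdots0}_{n}$. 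Since $d>0$ we have $d\ne\underbrace{0\cdots0}_{n}$, so provided $d\ne10^{n-1}$ the string $d$ does not occur in $10^{K}$ at all, whence $f_d(10^{K})=f_d(10^{K}-1)=10^{K}$. That disposes of every $d$ except the one exceptional value $d=10^{n-1}$ at each digit length.

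For $d=10^{n-1}$ — which includes the original Google puzzle, $n=1$, $d=1$ — the integer $10^{K}$ does contain $d$ at its front, so I would instead jump to $2\cdot10^{K}$ (with $K$ re‑chosen), whose decimal representation is a $2$ followed by $K$ zeros and thus contains no $1$, hence no copy of $d$. Then $f_d(2\cdot10^{K})=f_d(10^{K}-1)+\sum_{m=10^{K}}^{2\cdot10^{K}-1}(\text{occurrences of }d\text{ in }m)$; the integers $m$ in the range are precisely a $1$ followed by an arbitrary $K$‑digit block $s$, and summing the occurrences of $d=1\underbrace{0\cdots0}_{n-1}$ over all $10^{K}$ blocks $s$, split by the position of the occurrence, gives $10^{K-n+1}$ (occurrences that use the leading $1$ together with a zero‑prefix of $s$) plus $(K-n+1)10^{K-n}$ (occurrences lying inside $s$). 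Together with the warm‑up value $f_d(10^{K}-1)=(K-n+1)10^{K-n}$ this makes $f_d(2\cdot10^{K})=2\cdot10^{K-n}\big((K-n)+6\big)$, which equals $2\cdot10^{K}$ exactly when $K=10^{n}+n-6$ — a legitimate choice since $K\ge n$ for all $n\ge1$. Hence $x=2\cdot10^{K}$ is a solution, and in all cases $a_{=}(d)$ is well‑defined, with $a_{=}(d)\le10^{10^{n}+n-1}$.

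Neither computation is deep, so the real obstacle is conceptual: recognizing that the clean ``step past $10^{K}$'' argument collapses exactly for $d=10^{n-1}$ and must be replaced by the $2\cdot10^{K}$ variant, and then not dropping, in the substring count, the contribution of the copies of $d$ that straddle the leading $1$ and the zero‑prefix of $s$ — that straddling term is precisely what makes the final identity balance. A small preliminary is to nail down the warm‑up formula in the form $f_d(10^{k}-1)=(k-n+1)10^{k-n}$; with that in hand both cases go through verbatim. (It is worth noting that our bound $10^{10^{n}+n-1}$ is enormously weaker than the single‑digit bound $d\cdot10^{10}$ of Proposition~\ref{prop:bounds}, since for an $n$‑digit string the counting function $f_d$ only catches up to $x$ near $x\approx10^{10^{n}}$.)
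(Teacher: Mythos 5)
Your proof is correct and follows the same two-case strategy as the paper: evaluate $f_d$ at a power of $10$ when $d$ is not itself a power of $10$ (using the warm-up closed form for $f_d(10^k-1)$ plus the observation that $10^K$ contributes no new occurrences), and switch to $x=2\cdot 10^{K}$ with $K=10^n+n-6$ when $d=10^{n-1}$ --- that second value is literally the paper's $x=2\cdot 10^{10^n-6+n}$. The one substantive difference is that you use the corrected closed form $f_d(10^k-1)=(k-n+1)10^{k-n}$, whereas the paper's warm-up exercise (and hence the first half of its proof) states $f_d(10^k-1)=k\,10^{k-n}$ and concludes $f_d(10^{10^n})=10^{10^n}$; the two agree only for $n=1$, and for $n\ge 2$ your version is the right one (e.g.\ $f_{11}(99)=1$, not $2$), which is confirmed by the paper's own data $a_=(99)=10^{101}=10^{10^2+2-1}$ and $a_=(999)=10^{10^3+3-1}$ rather than $10^{100}$ and $10^{1000}$. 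So your bound $a_=(d)\le 10^{10^n+n-1}$ is the correct form of the paper's claimed bound $10^{10^n}$, and your careful inclusion of the straddling occurrences in the $2\cdot 10^K$ count matches (and slightly cleans up) the paper's contribution-by-position computation.
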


\begin{proof}
If $d$ is an $n$-digit sticker that is not a power of 10, then $f_d(10^k - 1) = f_d(10^k)$. From the exercise above, it follows that $f_d(10^k) = k10^{k-n}$. Plugging in $k = 10^n$, we get $f_d(10^{10^n}) = 10^n10^{10^n-n} = 10^{10^n}$. Thus, $10^{10^n}$ is always a solution for $f_d(x)= x$. Therefore, for an $n$-digit sticker $d$ that is not a power of 10, the function $a_=(d)$ is well-defined and $a_=(d) \le 10^{10^n}$.

Now we need to check the case when an $n$-digit sticker $d$ is a power of 10, that is $d = 10^{n-1}$. Consider $x = 2\cdot 10^{10^n-6+n}$, then $Y = 2\cdot 10^{10^n-6}$. We count the contribution of the sticker $d$, where the last digit is in the $k$-th place from the right, where $k$ must be in the range $\inclrange{1}{10^n-4}$.

If $k < 10^n-4$, then $x_k$ is a string of 0s and $d > x_k$, and the contribution for this $k$ is $Y = 2\cdot 10^{10^n-6}$. If $k = 10^n-4$, then $x_k$ is a 2 followed by $n-1$ zeros, so $d < x_k$, and the corresponding $Y$ is 0, and the contribution for this $k$ is $10^{k-1} = 2\cdot 10^{10^n-6} + 10^{10^n-5}$. Summing up, we get 
\[2\cdot 10^{10^n-6}(10^n-5) + 10^{10^n-5} = 2\cdot 10^{10^n-6+n},\]
implying that $f_d(x) = x$, which concludes the proof.
\end{proof}

Below is the smallest number $x$ for which the number of \boxed{10}s as substrings of the numbers in the range $\inclrange{1}{x}$ is more than or equal to $x$. And by a lucky strike, the equality holds. The number has 93 digits and doesn't fit on a line. Fortunately, the middle part of the number consists of a long run of nines, namely 88 of them. So we replaced some of the nines with dots without losing information. The number is:
\[a_\ge(\boxed{10}) = \numprint{109999999999999999999999}\cdots \numprint{999999999999999999999810}.\]
Now the reader can do an exercise and find the corresponding number for the ``more than'' sequence, $a_>(\boxed{10})$.

The value of $a_{\ge}(\boxed{11})$ miraculously has 93 digits with a middle run of 88 nines:
\[a_\ge(\boxed{11}) = \numprint{119999999999999999999999}\cdots \numprint{999999999999999999999811}.\] 
Note how strikingly similar it is to the tenth element of the sequence! Can you explain that similarity between $a_{\ge}(\boxed{10})$ and $a_{\ge}(\boxed{11})$?

Sadly, $a_{\ge}(\boxed{12})$ is not so pretty, though it still has a middle run of 68 nines allowing us to display it on the line using dots. The total number of digits is 94:
\[a_{\ge}(\boxed{12}) = \numprint{1296624070230872986615199999999}\cdots \numprint{999999999999812}.\] 
It appears that we are lucky again, that the $a_=$ sequence is the same as the $a_\ge$ sequence for \boxed{11} and \boxed{12}. Our luck runs out at \boxed{21}.

We have calculated the values up to $d$=\boxed{113} so far, but please feel free to contribute more compute! 

The values for $a_=(50)$ and $a_=(99)$ are the same nice, round number: $10^{101}$. Similarly, $a_=(999) = 10^{1002}$. It turns out that and $a_>(99) = a_=(99)-1$ and $a_>(999) = a_=(999)-1$, but there is no such pretty relationship for \boxed{50} or \boxed{500}.  There are, however, 6 two-digit values for which the difference is 100. There are also 8 two-digit values that share an $a_=$ value of $9465 \cdot 10^{97}$, reminiscent of the $a_=$ values for 7 and 8: $9465 \cdot 10^6$. And would you have guessed, there are corresponding three-digit values whose $a_=$ values are $9465 \cdot 10^{998}$! There are other tantalizing patterns; for details please see the code~\cite{MartonKhovanova2023}, and share what you find!

One would expect three-digit stickers to occur ten times less frequently than two-digit stickers, and indeed the corresponding values in this sequence that we've computed are in the neighborhood of a thousand digits. Each answer for a three-digit sticker has taken about an hour of compute to find, compared to a few seconds for two-digit numbers, so we haven't checked them all. 

Now that we defined our sequence for any sticker $d$, we get some natural sequences. The first one is the sequence of stickers $d$ for which $a_>(d) = a_\ge(d)$.:
\[5,\ 6,\ 7,\ 8,\ 9,\ 21,\ 24,\ 29,\ 33,\ 39,\ 50,\ 52,\ 55,\ 56,\ 58,\ 59,\ 63,\ 66,\ 67,\ \dots. \] Another lens on this sequence is those d for which $a_=(d)$ exists and $a_=(d) < a_>(d)$, which is almost the complement, except that 0 still does not appear: \[1, 2, 3, 4, 10, 11, \dots. \]

We can also extend Table~\ref{table:E_d}, with the value for \boxed{10} being \numprint{5352172560} followed by 90 zeroes, but the value for \boxed{11} is again not so pretty: values up to \boxed{94} are given in the supplementary materials~\cite{MartonKhovanova2023}.

Finally, we extend \A{130432} to the multi-digit case. Starting with the value for the \boxed{10} sticker, these are:
\[3167,\ 9043,\ 7485,\ 1305,\ 5299,\ 297,\ 4659,\ 1019,\ 37,\ 2019,\ 617,\ 621,\ \ldots.\]
Sadly, this sequence has lost the divisibility property: \A{130432}$(d) + 1$ is guaranteed to be divisible by $d$ only for $d < 10$.

\section{All Your Base}

Of course the sticker sheets that came with VHS tapes had letters too. Interestingly, some sticker sheets (e.g., Figure~\ref{fig:vhs-image}) had letters A through F, that seemed to beg for hexadecimal numbering, though other sheets included the full alphabet. The algorithms generalize straightforwardly to any base, substituting base $b$ where we previously wrote $10$. 
\begin{figure}[ht!]
    \centering    \includegraphics[width=0.5\linewidth]{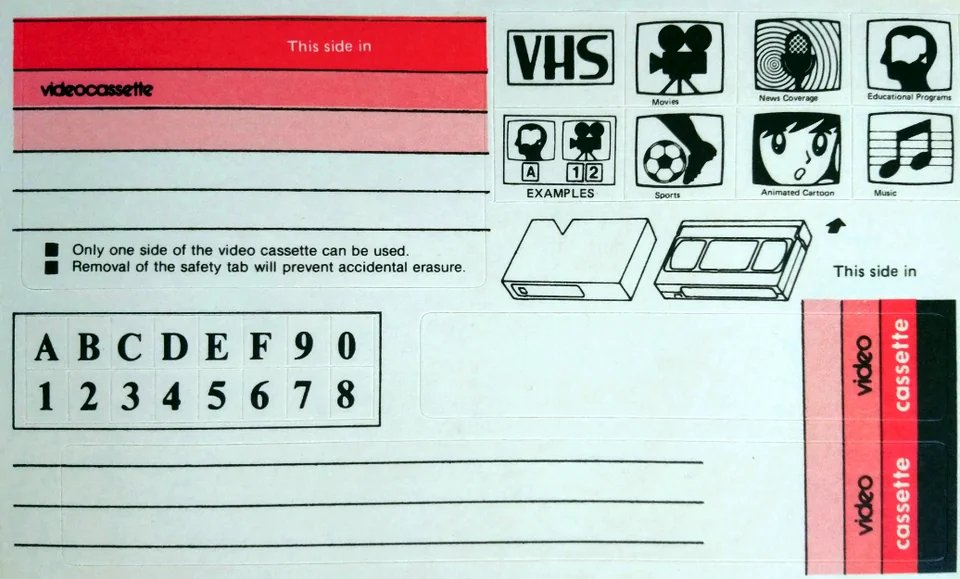}
    \caption{One of the sticker sheets that came with early VHS tapes, with gratitude to an r/nostalgia user~\cite{vhs-image}.}
    \label{fig:vhs-image}
\end{figure}

Let us add base $b$ as the second parameter of our functions. For example, we denote by $f_d(x,b)$ the number of times the sticker $d$ is used in the writing of numbers in the range $\inclrange{1}{x}$ in base $b$. Similarly, we add the base to functions $a$: $a_>(d, b)$, $a_=(d, b)$, and $a_\ge(d, b)$, where we assume that sticker $d$ is also written in base $b$.

The unary base, where $b=1$, is a special case, as only stickers containing ones are relevant, and the function $f_d(x,1)$ can be calculated explicitly. For example, $f_1(x,1) = \frac{x(x+1)}{2}$. We leave it for the reader to investigate multi-digit cases in this base, and from now on, we will assume that $b > 1$.

Two sequences related to different bases are already in the database.
\begin{itemize}
\item Sequence \A{092175}$(b)$ represents our sequence $a_>(1,b)$. Starting from base 1, the sequence $a_>(1, b)$ 
progresses as follows:
\[2,\ 3,\ 13,\ 29,\ 182,\ 427,\ \numprint{3931},\ \numprint{8185},\ \numprint{102781},\ \numprint{199991},\ \numprint{3179143},\ \ldots.\]

Comfortingly, \A{092175}$(10) = \numprint{199991}$, which we already knew from Table~\ref{table:2seq}.
\item Sequence \A{165617}$(b)$ counts the number of solutions to $f_1(x, b) = x$. Sequence \A{165617} starts from $b=2$ as
\[2,\ 4,\ 8,\ 4,\ 21,\ 5,\ 45,\ 49,\ 83,\ 10,\ 269,\ 11,\ 202,\ 412,\ 479,\ 15,\ \ldots,\]
and, not surprisingly, the ninth term is 83, which we already knew from Table~\ref{table:seqnum}.
\end{itemize}

By the way, sequence \A{165617} is easy to calculate, because the largest possible number such that $f_1(x, b) = x$ is known. This number is the concatenation of $b-1$ ones followed by a single zero written in base $b$, see the comment in sequence \A{165617}. Expressed in base 10, these largest numbers are (starting from index 2):
\[2,\ 12,\ 84,\ 780,\ \numprint{9330},\ \numprint{137256},\ \numprint{2396744},\ \numprint{48427560},\ \numprint{1111111110},\ \ldots,\]
and they are in the database as sequence \A{226238}.

We promised to show that the solution to $f_d(x) = x$ for a one-digit nonzero sticker \boxed{d} in base 10 doesn't exceed $d\cdot 10^{10}$. We waited for this moment to do the proof for any base $b$.

\begin{proposition}
\label{prop:bounds}
For any digit $d > 0$ in base $b > d$ the maximum possible value of $a_=(d, b)$ is $b^b$ and all $x$ such that $f_d(x, b)=x$ must be $\leq d \cdot b^b$.
\end{proposition}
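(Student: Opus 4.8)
The plan is to prove the two halves separately: first that $f_d(x,b)=x$ has a solution $\le b^b$ (which bounds $a_=(d,b)$), and then that $f_d(x,b)>x$ whenever $x\ge d\cdot b^b$ (which bounds every solution).

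For $a_=(d,b)\le b^b$, I would split according to whether $d$ is a power of $b$; since $0<d<b$, the only such case is $d=1$. If $d\ge 2$, the base-$b$ analogue of the warm-up identity gives $f_d(b^b-1,b)=b\cdot b^{b-1}=b^b$, and since $b^b$ is written in base $b$ as a $1$ followed by $b$ zeros it contains no digit $d$, so $f_d(b^b,b)=b^b$ and $x=b^b$ is a solution. If $d=1$, then $b^b$ does contain a digit $1$, so instead I would use the fact recalled earlier in the paper that $f_1(x,b)=x$ is solved by the base-$b$ number consisting of $b-1$ ones followed by a zero, namely $\frac{b^b-b}{b-1}$; this has only $b$ digits, hence is strictly less than $b^b$ and (for $b\ge 2$) at least $2$, giving $a_=(1,b)\le\frac{b^b-b}{b-1}<b^b$. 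That $b^b$ is actually attained as an $a_=$ value is already visible in base ten at $d=5$ and $d=9$.

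For the second half, set $g(x)=f_d(x,b)-x$. The key inequality is $g(x+b^b)\ge g(x)$ for every $x\ge 0$, a one-sided version of the periodicity observation of the introduction: the $b^b$ consecutive integers $x+1,\dots,x+b^b$ have distinct residues modulo $b^b$, so their bottom $b$ base-$b$ digits run through every length-$b$ string exactly once and hence contain the digit $d$ a total of exactly $b\cdot b^{b-1}=b^b$ times, while the digit positions above the $b$-th contribute a nonnegative amount; thus $f_d(x+b^b,b)-f_d(x,b)\ge b^b$. I would take the window $W=[\,d\cdot b^b,\ (d+1)b^b-1\,]$ as the base case: an equidistribution count gives $f_d(d\cdot b^b-1,b)=d\cdot b^b$ (the leading position ranges only over $0,\dots,d-1$), so the leading digit $d$ of $d\cdot b^b$ yields $g(d\cdot b^b)=1$; and since every integer in $W$ has leading digit $d$, hence contains a digit $d$, the map $g$ is nondecreasing on $W$, so $g\ge 1$ throughout $W$. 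Finally, for any $x\ge d\cdot b^b$ write $x=qb^b+s$ with $q=\lfloor x/b^b\rfloor\ge d$ and $0\le s<b^b$; then $x_0:=d\cdot b^b+s\in W$ and $x=x_0+(q-d)b^b$, so iterating $g(y+b^b)\ge g(y)$ gives $g(x)\ge g(x_0)\ge 1$. Hence $f_d(x,b)\ge x+1>x$ for all $x\ge d\cdot b^b$, so every solution of $f_d(x,b)=x$ lies strictly below $d\cdot b^b$.

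The two equidistribution counts ($b^b$ copies of $d$ in any block of $b^b$ consecutive integers, and $d\cdot b^b$ copies of $d$ in $1,\dots,d\cdot b^b-1$) are routine "each position is equidistributed" arguments. The step that needs the most care — and the only idea not already present in the excerpt — is recognizing that periodicity should be used in the one-sided form $g(x+b^b)\ge g(x)$, valid for all $x$ rather than only on the range where the increment is exactly $b^b$, together with identifying $W$ as the correct base window, since $W$ is precisely where the leading digit is forced to equal $d$ and $g$ is therefore bounded below by $1$.
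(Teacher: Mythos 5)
Your proposal is correct and takes essentially the same route as the paper: the first bound comes from the equidistribution count $f_d(b^b,b)=b^b$, and the second from the gap being at least $1$ on the window $[d\cdot b^b,(d+1)b^b)$, where the leading digit is forced to be $d$, propagated forward by the base-$b$ analogue of Lemma~\ref{lemma:step} (your one-sided inequality $g(x+b^b)\ge g(x)$ is just a cleaner packaging of the paper's ``repeat ad infinitum'' step). In fact your treatment of the first half is more careful than the paper's, which asserts $f_d(b^b,b)=b^b$ for all digits even though this fails for $d=1$ (as the paper itself observes in base $10$, since $b^b$ contains a digit $1$); your fallback to the known solution consisting of $b-1$ ones followed by a zero correctly patches that case.
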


\begin{proof}
Similar to base 10, we can calculate that $f_b(b^b) = b^b$, proving that $a_=(d, b) \le b^b$. If $x = d \cdot b^b$, then $f_d(x,b)=x + 1$. All numbers in the range $\inclrange{d \cdot b^b}{(d+1) b^b}$ have $d$ for the first digit, implying that there are no solutions to $f_d(x,b) = x$ in this range. Then $f_d((d+1) b^b) = (d+2) b^b$. Converting Lemma~\ref{lemma:step} to base $b$, we see that no solution can appear among the next $b^b$ numbers, while the next $b^b$ numbers use at least $b^b$ digits $d$. By repeating this ad infinitum, the conclusion follows.
\end{proof}

We can generalize Theorem~\ref{thm:welldefined} to any base $b > 2$.

\begin{theorem}
The value $a_=(d,b)$ is well-defined for any $b > 2$ and any $d > 0$. For $b = 2$, it is well-defined when $d> 0$ is not a power of 2.
\end{theorem}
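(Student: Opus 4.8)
The plan is to rerun the proof of Theorem~\ref{thm:welldefined} (together with the bound from Proposition~\ref{prop:bounds}) with $10$ replaced everywhere by $b$, splitting on whether the $n$-digit sticker $d$ (where $n$ is its length in base $b$) is a power of $b$. The only ingredient that must be restated is the warm-up identity, which in base $b$ reads $f_d(b^k-1,b)=k\,b^{k-n}$ for $k\ge n$; its proof is the same count of positions of an $n$-digit block inside $k$-digit strings, and it holds for every $b\ge2$. Everything after that is bookkeeping with the base-$b$ form of the closed expression for $f_d$ from the Multiple Digits section.

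First suppose $d$ is \emph{not} a power of $b$. Then $b^k$, whose base-$b$ digits are a single $1$ followed by $k$ zeros, has only $0^n$ and $b^{n-1}$ as length-$n$ substrings, so $d$ does not occur in it and $f_d(b^k,b)=f_d(b^k-1,b)=k\,b^{k-n}$; taking $k=b^n$ gives $f_d(b^{\,b^n},b)=b^{\,b^n}$, so $a_=(d,b)$ is well-defined and at most $b^{\,b^n}$. This argument is valid for every base $b\ge2$, and since a positive integer that is not a power of $2$ has at least two binary digits and so is not a substring of any $2^k$, it already settles the $b=2$ clause of the theorem.

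The remaining case is $d=b^{\,n-1}$, where $b^k$ does contain $d$, the clean solution above overshoots by exactly one (its leading $1$), and one must construct an explicit $x$ by hand — this is the crux. For $n=1$, so $d=1$, I would take $L=\underbrace{1\cdots1}_{b-1}0$ written in base $b$, i.e. $L=(b^b-b)/(b-1)$, which exceeds $1$ for every $b\ge2$, and check $f_1(L,b)=L$ directly by summing the per-position contributions of $L$ to the digit count (a short calculation that recovers the fact, quoted in the All Your Base section, that $L$ is the largest solution of $f_1(x,b)=x$). For $n\ge2$ I would generalise the explicit number in the proof of Theorem~\ref{thm:welldefined}, which was $x=2\cdot10^{\,10^n+n-6}$ and relied on $2\mid 10$; in an arbitrary base no single digit need divide $b$, so instead I would prepend the two-digit block $12$ in base $b$ and take
\[
x=(b+2)\,b^{M},\qquad M=b^{n}-b+n-1,
\]
which is legitimate precisely because $b\ge3$ makes $2$ a valid digit and $b+2<b^{2}$. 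Counting occurrences of $d=b^{n-1}$ among $1,\dots,x$ by the position of the occurrence, exactly as in Theorem~\ref{thm:welldefined}: the $M-n+1$ windows that lie inside the block of trailing zeros each contribute $(b+2)b^{\,M-n}$, the window whose top digit is the $2$ contributes $2b^{\,M-n+1}$, the window whose top two digits read $12$ contributes $b^{\,M-n+2}$, and windows that reach above the leading digit of $x$ contribute nothing; the last two pieces combine to $(b+2)b^{\,M-n+1}$, so the total is $(b+2)b^{\,M-n}(M-n+1+b)$, which equals $x=(b+2)b^{M}$ exactly when $M=b^{n}-b+n-1$. Hence $f_{b^{n-1}}(x,b)=x$, so $a_=(d,b)\le (b+2)b^{\,b^{n}-b+n-1}$, finishing the $b>2$ clause.

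The step I expect to be the main obstacle is this last sub-case, $n\ge2$ with $d$ a power of $b$: there is no shortcut around constructing a bespoke solution, and the base-ten trick of peeling off a single digit that divides the base is not base-independent. Passing to the two-digit block $12$ repairs it uniformly for all $b\ge3$; the one fine point is the edge case $n=2$, where the run of zeros interior to $d$ (and the matching interior windows of $x$) is empty, and one should confirm the bookkeeping is unaffected. The hypothesis $b>2$ — equivalently, the exclusion of powers of $2$ when $b=2$ — is exactly what is needed to have the digit $2$ available for this construction.
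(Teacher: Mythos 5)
Your proposal is correct, and it is actually more complete than the paper's own argument, which consists of the single sentence ``by changing 10 to base $b$ in all the right places, Theorem~\ref{thm:welldefined} can be adjusted.'' For $d$ not a power of $b$ your argument is exactly the paper's: $f_d(b^{b^n},b)=b^{b^n}$, and this alone settles the $b=2$ clause. The divergence --- and the value you add --- is the case $d=b^{n-1}$ with $n\ge 2$. The base-ten witness $2\cdot 10^{10^n+n-6}$ translates to $x=c\cdot b^{N}$ with a single-digit prefix $c$ only when the resulting equation $c(N-n+1)+b=cb^n$ has an integer solution $N$, i.e.\ only when $c\mid b$ (and $c=1$ fails outright, since the top window then equals $d$); so for $b$ an odd prime no single-digit prefix works and the paper's ``change 10 to $b$'' is not literally sufficient. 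Your two-digit prefix $12_b$, giving $x=(b+2)b^{M}$ with $M=b^n-b+n-1$, repairs this uniformly for $b\ge 3$: the window-by-window count ($M-n+1$ all-zero windows contributing $(b+2)b^{M-n}$ each, the ``$2$'' window contributing $2b^{M-n+1}$, the ``$12$'' window contributing $b^{M-n+2}$, everything higher contributing nothing, and no window ever falling into the $x_k=d$ case) does sum to $(b+2)b^{M-n}(M-n+1+b)=x$ exactly at your choice of $M$, including the $n=2$ edge case. Your separate treatment of $n=1$ via $L=(b^b-b)/(b-1)$ is also genuinely needed --- the $12$-prefix bookkeeping silently breaks at $n=1$ because the top one-digit window then \emph{equals} $d=1$ --- and $f_1(L,b)=L$ checks out by the same per-position sum. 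In short: same skeleton as the paper, but you have located and filled a real gap in the power-of-$b$ case for odd bases.
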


\begin{proof}
    By changing 10 to base $b$ in all the right places, the Theorem~\ref{thm:welldefined} can be adjusted for any $b$ and $d$, except when $b = 2$, and $d$ is a power of $b$.
\end{proof}

As you might have noticed, the theorem above excludes cases when $b = 2$ and $d$ is a power of 2. We ran our program for those cases, with findings shown in Table~\ref{table:base2_powerof_2}, and confirmed that indeed not all seem to exist. We have not proven an upper bound, but checked for 7 up to 1200 decimal digits, far larger than the solutions found for larger powers.

\begin{table}[ht!]
\begin{center}
\begin{tabular}{r r r r}
\multicolumn{2}{c}{$d$} & $\{x : f_d(x, 2) = x\}$ & bits in $a_=(d, 2)$\\
2 & \boxed{10} & 21 & 5\\
4 & \boxed{100} & 610 & 10\\
8 & \boxed{\numprint{1000}} & \numprint{283187} & 19\\
16 & \boxed{\numprint{10000}} & \numprint{35609822115} & 36\\
32 & \boxed{\numprint{100000}} & \numprint{300185978028231432373} & 69\\
64 & \boxed{\numprint{1000000}} & \textrm{unique value} & 134 \\
128 & \boxed{\numprint{10000000}} & \multicolumn{2}{c}{\hspace*{8em}\textrm{not found!}} \\
256 & \boxed{\numprint{100000000}} & \textrm{unique value} & 520 \\
512 & \boxed{\numprint{1000000000}} & \textrm{unique value} & 1033 \\
1024 & \boxed{\numprint{10000000000}} & \textrm{1023 consecutive values} & 2058 \\

\end{tabular}
\caption{Solutions for $f_d(x, 2) = x$ for stickers $\boxed{d}$ being the binary stickers corresponding to powers of 2. Larger values are too long to show here.}
\label{table:base2_powerof_2}
\end{center}
\end{table}

We now turn our attention to $d=0$. Many of the values $a_=(0, b)$ are undefined. To be sure, we need to check to some upper bound, and it need not be tight, but how far do we need to check?

\begin{proposition}
\label{prop:zerobounds}
For digit 0 in base $b > 1$, the value of $a_=(0, b)$, if it is well-defined, must be less than $b^{b+3}$.
\end{proposition}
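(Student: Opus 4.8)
The plan is to show, more than what is asked, that $f_0(x,b) > x$ for \emph{every} $x \ge b^{b+3}$; this forces every solution of $f_0(x,b)=x$ to lie strictly below $b^{b+3}$, so that $a_=(0,b) < b^{b+3}$ whenever it is defined (and if no solution exists there is nothing to prove). The engine is the base-$b$ analogue of Lemma~\ref{lemma:step}, already used in the proof of Proposition~\ref{prop:bounds}: for every integer $x > b^b$ we have $f_0(x+b^b,b) \ge f_0(x,b) + b^b$, because the $b^b$ consecutive integers in $(x,x+b^b]$ run through all residues modulo $b^b$, so their last $b$ base-$b$ digits realize every length-$b$ string and contribute $b\cdot b^{b-1}=b^b$ zeros in positions that, since $x > b^b$, are never leading.

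Next I would package this into a ``lock-in'' step. Put $g(x) = f_0(x,b) - x$. The inequality above reads $g(x+b^b) \ge g(x)$ for $x > b^b$, i.e. $g$ is non-decreasing along each residue class modulo $b^b$; and trivially $g(x+1) \ge g(x) - 1$, since $f_0$ increases by a nonnegative zero-count at each step. I claim that if $g(x_0) \ge b^b$ for some $x_0 > b^b$, then $g(x) \ge 1$ for all $x > x_0$: writing $x = x_0 + m b^b + r$ with $m \ge 0$ and $0 \le r < b^b$, iterating the first inequality (all intermediate points stay above $b^b$) gives $g(x_0 + m b^b) \ge g(x_0) \ge b^b$, and then $r$ applications of the second give $g(x) \ge b^b - r \ge 1$. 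So it suffices to exhibit a single $x_0 < b^{b+3}$ with $f_0(x_0,b) \ge x_0 + b^b$.

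For that I would take $x_0 = b^{b+3}-1$, the largest $(b+3)$-digit number in base $b$. Counting zeros by digit length — among the $m$-digit numbers each of the last $m-1$ places vanishes $(b-1)b^{m-2}$ times — gives
\[
f_0(b^k-1,b) = \sum_{m=2}^{k} (m-1)(b-1)b^{m-2} = \frac{(k-1)b^k - k b^{k-1} + 1}{b-1}.
\]
Setting $k = b+3$, the required inequality $f_0(b^{b+3}-1,b) \ge (b^{b+3}-1) + b^b$ reduces, after multiplying through by $b-1$ and cancelling, to $b^b\bigl(2b^3 - 3b^2 - b + 1\bigr) + b > 0$, which holds for all $b \ge 2$: the cubic equals $3$ at $b=2$ and is increasing for $b\ge 2$. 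Together with the lock-in step and $x_0 = b^{b+3}-1 > b^b$, this yields $f_0(x,b) > x$ for all $x \ge b^{b+3}$, completing the argument.

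The only genuinely delicate point is the calibration in the last paragraph: $x_0$ must be close enough to $b^{b+3}$ that the bound it produces is exactly $b^{b+3}$, yet far enough ahead that $f_0(x_0,b)-x_0 \ge b^b$, and the margin is thinnest at $b=2$, where crude lower bounds on $f_0$ (counting only the top-length block) are insufficient and one needs the full count over all digit lengths $\le b+3$. Everything else — verifying the zero-count identity and checking the final polynomial inequality (by hand for small $b$, by the sign of the leading term for large $b$) — is routine bookkeeping.
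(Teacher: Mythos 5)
Your proof is correct and follows essentially the same route as the paper: the base-$b$ analogue of Lemma~\ref{lemma:step} plus a single point below $b^{b+3}$ at which $f_0$ leads its argument by at least $b^b$, which locks the zero count permanently ahead and excludes any solution of $f_0(x,b)=x$ from $[b^{b+3},\infty)$. The only difference is bookkeeping: the paper certifies the margin by counting zeros in the top block of $(b+3)$-digit numbers alone and must check $b=2$ by hand, whereas your exact count $f_0(b^{b+3}-1,b)=\bigl((b+2)b^{b+3}-(b+3)b^{b+2}+1\bigr)/(b-1)$ reduces the margin condition to a polynomial inequality valid uniformly for all $b\ge 2$.
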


\begin{proof}
Similar to base 10, it is enough to find a number $y > b^b$, such that $f_0(y,b) > y + b^b$. We are then guaranteed that there are no solutions to $f_0(x,b) = x$, for $x > y$.

The number of zeros used in range $\inclrange{b^{p-1}}{b^{p} - 1}$ is $(p-1)(b-1)b^{p-2}$. When $p = b+3$, then the range contains $(b+2)(b-1)b^{b+1} = b^{b+3} + (b-2)b^{b+1} \ge b^{b+3} + b^b$ zeros, whenever $b > 2$. 

If $b=2$, then $b^{b+3} = 32$. The number of zeros used in the range $\inclrange{1}{32}$ is 54, which is greater than $32+4$.

Thus, for $b>1$, and $y = b^{b+3}$, we have $f_0(y,b) > y + b^b$, implying that $a_=(0,b) < y$.
\end{proof}

Those bases in which $a_=(0, b)$ does not exist are now \A{364972}: 
\[3, 4, 5, 6, 7, 8, 9, 10, 12, 14, 15, 17, 18, 19, 20, \ldots\]
The first few values of $a_=(0, b)$ where it does exist are shown in Table~\ref{table:zero_in_bases}.

\begin{table}[ht!]
\begin{center}
\begin{tabular}{rr}
base $b$ & $a_=(0, b)$ \\
2& 8 \\
11& \numprint{3152738985031} \\
13& \numprint{3950024143546664} \\
16& \numprint{295764262988176583799} \\
24& \numprint{32038681563209056709427351442469835} \\
26& \numprint{160182333966853031081693091544779177187} \\
28& \numprint{928688890453756699447122559347771300777482} \\
29& \numprint{74508769042363852559476397161338769391145562} \\
31& \numprint{529428987529739460369842168744635422842585510266}
\end{tabular}
\caption{Values of the ``exactly'' sequence for the zero sticker in the first few bases where it exists.}
\label{table:zero_in_bases}
\end{center}
\end{table}

\section{Future research}

We have stretched the notion of a sticker with the multi-digit case, because although our definition has nice mathematical properties it uses stickers that are counted as overlapping, and excludes multi-digit stickers that begin with a zero. These are of course both decisions that one can relax to explore fresh possibilities.

The common inclusion of letter stickers on these VHS sticker sheets invites not only generalization to other bases, but also generalization to, dare we say it, textual labels? But this is still a math paper, so let's not get carried away: we can restrict ourselves by imagining textual labels that correspond to spelled-out versions of the numbers! Thus, for some English-like convention $c$,

\begin{tabular}{r r l}
$a_=(``o", c)$&$=2$&``One", ``twO"),\\
$a_=(``e", c)$&$=3$&(``onE", ``two", ``thrEE") \\
$a_>(``j", c)$&undefined& ``J'' does not appear.\\
\end{tabular}

Of course there are a jillion variations by language, locale, etc.

Throughout the paper we've noted phenomena that seemed interesting but bear further investigation. Examples include the many solutions to $a_=$ in base 10 that begin with ``9465", what governs whether $a_=$ is smaller or greater than $a_>$, what happens in unary base, whether the sequences are well-defined for all powers of 2 in base 2, and whether all of those have unique solutions. We encourage you to look at the tables in the supplementary materials, as there are many more patterns begging for attention.

One such pattern we see in the data, but have not proven anything about is that the number of digits in $a_=(b, b)$ equals $b^2 + b + 3$, for $b>2$. The solutions, expressed in their respective bases in Table~\ref{table:10inBases}, all have a similar form for $b>2$: the first digits are 1, then 0, then digits $b-1$, finishing with $b-2$, 1, 0.

\begin{table}[ht!]
\begin{tabular}{r r r}
base & length & $a_=(b, b)$ or equivalently, $a_=(\boxed{10}, b)$ \\
2 & 5 & $\numprint{10101}_2$, \\
3 & 9 & $\numprint{102222110}_3$,\\
4 & 15 & $\numprint{103333333333210}_4$,\\
5 & 23 & $\numprint{10444444444444444444310}_5$,\\ 
6 & 33 & $\numprint{105555555555555555555555555555410}_6$. 
\end{tabular}
\caption{Lengths and values of $a_=(\boxed{10}, b)$ for the first few bases $b$.}
\label{table:10inBases}
\end{table}

And of course the sequences we described in this paper can be extended, and there are many related sequences to be cataloged. We would love to hear tales from your explorations. Enjoy the sequence hunt!

\section{Acknowledgments}

We are grateful to Alexey Radul for his helpful suggestions. We are also thankful to the anonymous reviewers of the American Mathematical Monthly for encouraging us to dig deeper into the topic and providing helpful ideas and suggestions.

\end{document}